\def\B{\mathscr B}
\def\C{\mathbb C}
\def\d{\mathrm{d}}
\def\H{\mathcal H}
\def\N{\mathbb N}
\def\R{\mathbb R}
\def\S{\mathbb S}
\def\T{\mathbb T}
\def\Z{\mathbb Z}
\def\dom{\mathcal D}
\def\e{\mathop{\mathrm{e}}\nolimits}
\def\linf{\mathsf{L}^{\:\!\!\infty}}
\def\ltwo{\mathsf{L}^{\:\!\!2}}
\DeclareMathOperator*{\slim}{s\;\!-lim\;\!}
\newtheorem{Theorem}{Theorem}[section]
\newtheorem{Lemma}[Theorem]{Lemma}
\newtheorem{Assumption}[Theorem]{Assumption}
\begin{document}


\title{Furstenberg transformations on Cartesian products of infinite-dimensional tori}

\author{P. A. Cecchi$^1$ and R. Tiedra de Aldecoa$^2$\footnote{Supported by the Chilean
Fondecyt Grant 1130168 and by the Iniciativa Cientifica Milenio ICM RC120002
``Mathematical Physics'' from the Chilean Ministry of Economy.}}

\date{\small}
\maketitle \vspace{-1cm}

\begin{quote}
\emph{
\begin{itemize}
\item[$^1$] Departamento de Matem\'aticas, Universidad de Santiago de Chile,\\
Av. Alameda Libertador Bernardo O'Higgins 3363, Estaci\'on Central, Santiago, Chile
\item[$^2$] Facultad de Matem\'aticas, Pontificia Universidad Cat\'olica de Chile,\\
Av. Vicu\~na Mackenna 4860, Santiago, Chile
\item[] \emph{E-mails:} pcecchib@gmail.com, rtiedra@mat.puc.cl
\end{itemize}
}
\end{quote}


\begin{abstract}
We consider in this note Furstenberg transformations on Cartesian products of
infinite-dimensional tori. Under some appropriate assumptions, we show that these
transformations are uniquely ergodic with respect to the Haar measure and have
countable Lebesgue spectrum in a suitable subspace. These results generalise to
the infinite-dimensional setting previous results of H. Furstenberg, A. Iwanik,
M. Lema\'nzyk, D. Rudolph and the second author in the one-dimensional setting.
Our proofs rely on the use of commutator methods for unitary operators and Bruhat
functions on the infinite-dimensional torus.
\end{abstract}

\textbf{2010 Mathematics Subject Classification:} 28D10, 37A30, 37C40, 58J51.

\smallskip

\textbf{Keywords:} Furstenberg transformations, infinite-dimensional torus, commutator
methods.

\section{Introduction}
\setcounter{equation}{0}

We consider in this note the generalisation of Furstenberg transformations on
Cartesian products $\T^d$ ($d\ge2$) of one-dimensional tori \cite[Sec.~2]{Fur61} to
the case of Cartesian products $(\T^\infty)^d$ of infinite-dimensional tori. Using
recent results on commutator methods for unitary operators \cite{Tie14_2,Tie13}, we
show under a $C^1$ regularity assumption on the perturbations that these
transformations are uniquely ergodic with respect to the Haar measure on
$(\T^\infty)^d$ and are strongly mixing in the orthocomplement of functions depending
only on variables in the first torus $\T^\infty$ (see Assumption \ref{ass_phi} and
Theorem \ref{thm_erg}). Under a slightly stronger regularity assumption ($C^1$ + Dini
continuous derivative), we also show that these transformations have countable
Lebesgue spectrum in that orthocomplement (see Theorem \ref{thm_Lebesgue}). These
results generalise to the infinite-dimensional setting previous results of H.
Furstenberg, A. Iwanik, M. Lema\'nzyk, D. Rudolph and the second author
\cite{Fur61,ILR93,{Tie13}} in the one-dimensional setting.

Apart from commutator methods, our proofs rely on the use of Bruhat test functions
\cite{Bru61} which provide a natural analog to the usual $C^\infty$-functions which do
not exist in our infinite-dimensional setting (see Section \ref{Sec_Fur} for details).
We also mention that all the results of this note apply to Furstenberg transformations
on Cartesian products $(\T^n)^d$ of finite-dimensional tori $\T^n$ of any dimension
$n\ge1$. One just has to consider the particular case where the functions defining the
Furstenberg transformations on $(\T^\infty)^d$ depend only on a finite number of
variables.

Here is a brief description of the content of the note. First, we recall in Section
\ref{Sec_commutator} the needed facts on commutators of unitary operators and
regularity classes associated to them. Then, we define in Section \ref{Sec_Fur} the
Furstenberg transformations on $(\T^\infty)^d$, and prove Theorems \ref{thm_erg} and
\ref{thm_Lebesgue} on the unique ergodicity, strong mixing property and countable
Lebesgue spectrum of these transformations.

We refer to \cite{Gre11,Ji86,OP06,Rei13} and references therein for other works
building on Furstenberg transformations.\\

\noindent
{\bf Acknowledgements.} The authors thank M. M{\u{a}}ntoiu and J. Rivera-Letelier for
discussions which partly motivated this note.

\section{Commutator methods for unitary operators}\label{Sec_commutator}
\setcounter{equation}{0}

We recall in this section some facts borrowed from \cite{FRT13,Tie14_2} on commutator
methods for unitary operators and regularity classes associated to them.

Let $\H$ be a Hilbert space with scalar product
$\langle\;\!\cdot\;\!,\;\!\cdot\;\!\rangle$ antilinear in the first argument, denote
by $\B(\H)$ the set of bounded linear operators on $\H$, and write
$\|\;\!\cdot\;\!\|$ both for the norm on $\H$ and the norm on $\B(\H)$. Let $A$ be a
self-adjoint operator in $\H$ with domain $\dom(A)$, and take $S\in\B(\H)$. For any
$k\in\N$, we say that $S$ belongs to $C^k(A)$, with notation $S\in C^k(A)$, if the
map
\begin{equation}\label{eq_group}
\R\ni t\mapsto\e^{-itA}S\e^{itA}\in\B(\H)
\end{equation}
is strongly of class $C^k$. In the case $k=1$, one has $S\in C^1(A)$ if and only if
the quadratic form
$$
\dom(A)\ni\varphi\mapsto\big\langle\varphi,SA\;\!\varphi\big\rangle
-\big\langle A\;\!\varphi,S\varphi\big\rangle\in\C
$$
is continuous for the topology induced by $\H$ on $\dom(A)$. We denote by $[S,A]$
the bounded operator associated with the continuous extension of this form, or
equivalently $-i$ times the strong derivative of the function \eqref{eq_group} at $t=0$.

A condition slightly stronger than the inclusion $S\in C^1(A)$ is provided by the
following definition\hspace{1pt}: $S$ belongs to $C^{1+0}(A)$, with notation
$S\in C^{1+0}(A)$, if $S\in C^1(A)$ and if $[A,S]$ satisfies the Dini-type condition
$$
\int_0^1\frac{\d t}t\;\!\big\|\e^{-itA}[A,S]\e^{itA}-[A,S]\big\|<\infty.
$$
As banachisable topological vector spaces, the sets $C^2(A)$, $C^{1+0}(A)$, $C^1(A)$
and $C^0(A)\equiv\B(\H)$ satisfy the continuous inclusions \cite[Sec.~5.2.4]{ABG96}
$$
C^2(A)\subset C^{1+0}(A)\subset C^1(A)\subset C^0(A).
$$

Now, let $U\in C^1(A)$ be a unitary operator with (complex) spectral measure
$E^U(\;\!\cdot\;\!)$ and spectrum $\sigma(U)\subset\mathbb S^1:=\{z\in\C\mid|z|=1\}$.
If there exist a Borel set $\Theta\subset\mathbb S^1$, a number $a>0$ and a compact
operator $K\in\B(\H)$ such that
\begin{equation}\label{Mourre_U}
E^U(\Theta)\;\!U^*[A,U]\;\!E^U(\Theta)\ge a\;\!E^U(\Theta)+K,
\end{equation}
then one says that $U$ satisfies a Mourre estimate on $\Theta$ and that $A$ is a
conjugate operator for $U$ on $\Theta$. Also, one says that $U$ satisfies a strict
Mourre estimate on $\Theta$ if \eqref{Mourre_U} holds with $K=0$. One of the
consequences of a Mourre estimate is to imply spectral properties for $U$ on $\Theta$.
We recall here these spectral results in the case $U\in C^{1+0}(A)$. We also recall
a result on the strong mixing property of $U$ in the case $U\in C^1(A)$
(see \cite[Thm.~2.7 \& Rem.~2.8]{FRT13} and \cite[Thm.~3.1]{Tie14_2} for more details).

\begin{Theorem}[Absolutely continuous spectrum]\label{thm_abs}
Let $U$ and $A$ be respectively a unitary and a self-ajoint operator in a Hilbert
space $\H$, with $U\in C^{1+0}(A)$. Suppose there exist an open set
$\Theta\subset\mathbb S^1$, a number $a>0$ and a compact operator $K\in\B(\H)$ such
that
\begin{equation}\label{Mourre_U_ter}
E^U(\Theta)\;\!U^*[A,U]\;\!E^U(\Theta)\ge a\;\!E^U(\Theta)+K.
\end{equation}
Then, $U$ has at most finitely many eigenvalues in $\Theta$, each one of finite
multiplicity, and $U$ has no singular continuous spectrum in $\Theta$. Furthermore,
if \eqref{Mourre_U_ter} holds with $K=0$, then $U$ has only purely absolutely
continuous spectrum in $\Theta$ (no singular spectrum).
\end{Theorem}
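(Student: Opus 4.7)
My plan follows the standard Mourre-type strategy adapted to unitary operators, as developed in \cite{FRT13,Tie14_2}. The key ingredients are a Virial theorem for unitaries, a localisation argument eliminating the compact term, and a limiting absorption principle (LAP) whose hypotheses are precisely encoded in the regularity class $C^{1+0}(A)$.

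First I would establish a Virial-type identity: for any eigenvector $\psi\in\H$ of $U$ with $U\psi=\lambda\psi$, one has $\langle\psi,U^*[A,U]\psi\rangle=0$. On vectors in $\dom(A)$ the relation $U^*[A,U]=U^*AU-A$ gives this at once, and the general case follows by density since $U\in C^1(A)$ ensures that $U^*[A,U]$ extends to a bounded operator. Combining the identity with \eqref{Mourre_U_ter} applied to a normalised eigenvector $\psi$ with $\lambda\in\Theta$ yields $0\ge a\|\psi\|^2+\langle\psi,K\psi\rangle$. A standard Weyl-sequence argument using the compactness of $K$ then prevents eigenvalues from accumulating in $\Theta$ and bounds the dimension of each eigenspace; together with the openness of $\Theta$ this gives at most finitely many eigenvalues in $\Theta$, each of finite multiplicity.

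The core step is the LAP. Away from the (finitely many) eigenvalues, the openness of $\Theta$ and the compactness of $K$ allow one to sharpen \eqref{Mourre_U_ter} locally: for each non-eigenvalue $z_0\in\Theta$ there exist an open sub-arc $\Theta_0\ni z_0$ and $a'>0$ such that $E^U(\Theta_0)\;\!U^*[A,U]\;\!E^U(\Theta_0)\ge a'\;\!E^U(\Theta_0)$. One then studies the sandwiched resolvent $\langle A\rangle^{-s}(U-z)^{-1}\langle A\rangle^{-s}$ for $s>1/2$ and $z\in\C\setminus\mathbb S^1$ close to $\Theta_0$, and derives a differential inequality for its operator norm as $|z|\to1$. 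The Dini-type hypothesis built into $C^{1+0}(A)$ is exactly what is required to integrate this inequality up to the unit circle, giving uniform boundary values of the resolvent between suitable weighted spaces. A Stone-type formula for $E^U(\,\cdot\,)$ then excludes singular continuous spectrum on $\Theta_0$, and a covering argument extends the conclusion to all of $\Theta$.

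Finally, if $K=0$ the Virial step immediately rules out any eigenvector of $U$ in $\Theta$, and the LAP then yields purely absolutely continuous spectrum there. The main obstacle is unquestionably the LAP: the Mourre differential inequality must be recast in a form adapted to the unit circle (with $|z|$ approaching $1$ from either side), and the Dini condition must be exploited carefully to control the error terms when integrating in the radial parameter. Since this analysis is performed in detail in \cite[Thm.~2.7 \& Rem.~2.8]{FRT13} and \cite[Thm.~3.1]{Tie14_2}, I would simply quote those results rather than reproduce the full argument here.
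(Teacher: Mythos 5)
The paper gives no proof of this theorem: it is recalled verbatim from \cite[Thm.~2.7 \& Rem.~2.8]{FRT13} and \cite[Thm.~3.1]{Tie14_2}, which is exactly where your outline also ends up. Your sketch of the underlying machinery (virial identity, compactness argument for finiteness of eigenvalues, limiting absorption principle driven by the Dini condition in $C^{1+0}(A)$) is an accurate description of how those cited results are proved, so the proposal is correct and takes essentially the same approach as the paper.
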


\begin{Theorem}[Strong mixing]\label{thm_strong}
Let $U$ and $A$ be respectively a unitary and a self-adjoint operator in a Hilbert
space $\H$, with $U\in C^1(A)$. Assume that the strong limit
$$
D:=\slim_{N\to\infty}\frac1N\sum_{\ell=0}^{N-1}U^\ell\big([A,U]U^{-1}\big)U^{-\ell}
$$
exists, and suppose that $\eta(D)\;\!\dom(A)\subset\dom(A)$ for each
$\eta\in C^\infty_{\rm c}(\R\setminus\{0\})$. Then,
\begin{enumerate}
\item[(a)] $\lim_{N\to\infty}\big\langle\varphi,U^N\psi\big\rangle=0$ for each
$\varphi\in\ker(D)^\perp$ and $\psi\in\H$,
\item[(b)] $U|_{\ker(D)^\perp}$ has purely continuous spectrum.
\end{enumerate}
\end{Theorem}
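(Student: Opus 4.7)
I would proceed in three stages: establish the basic properties of $D$, prove (a) by a Cesàro/telescoping calculation that uses the regularity $\eta(D)\dom(A)\subset\dom(A)$, and deduce (b) from (a) via Wiener's theorem.

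First, one checks that $D$ is bounded, self-adjoint, and commutes with $U$. Boundedness follows from $U\in C^1(A)$, which ensures $X:=[A,U]U^{-1}$ is bounded; self-adjointness follows because $X=A-UAU^*$ on $\dom(A)$ is self-adjoint, hence every conjugate $U^\ell X U^{-\ell}$ is self-adjoint and so is the strong limit $D$. The commutation $[D,U]=0$ follows from the telescoping identity $UDU^*-D=\slim_{N\to\infty}\tfrac{1}{N}(U^N X U^{-N}-X)=0$, so in particular $\ker(D)^\perp$ is $U$-invariant and the spectral projections of $D$ commute with every power of $U$.

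For part (a), by density it is enough to consider $\varphi=\eta(D)\varphi_0$ with $\eta\in C^\infty_{\rm c}(\R\setminus\{0\})$ and $\varphi_0,\psi\in\dom(A)$. Writing $\eta(t)=t\tilde\eta(t)$ with $\tilde\eta\in C^\infty_{\rm c}(\R\setminus\{0\})$ gives $\eta(D)=D\tilde\eta(D)$, and the hypothesis ensures $\tilde\eta(D)\varphi_0\in\dom(A)$. I would then substitute the Cesàro definition of $D$ inside $\langle\eta(D)\varphi_0,U^N\psi\rangle$ and use the telescoping identity
$$
[A,U^M]U^{-M}=\sum_{\ell=0}^{M-1}U^\ell X U^{-\ell}
$$
to collapse the $\ell$-sum into boundary terms at $\ell=0$ and $\ell=M$. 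The $\ell=0$ term is independent of $M$ and so vanishes after division by $M$; the $\ell=M$ term is handled by splitting $AU^{-M}=U^{-M}A+[A,U^{-M}]$, bounding $\|[A,U^{\pm M}]\|\leq M\|[A,U]\|$, and using the defining convergence $\tfrac{1}{M}[A,U^M]U^{-M}\to D$ to isolate a remainder that decays as $N\to\infty$. The main obstacle here is that a single substitution of the Cesàro expression reproduces the original matrix element (a tautology), so to extract genuine decay one must iterate the absorption, e.g.\ by starting instead from the factorization $\eta(t)=t^2\hat\eta(t)$ and applying the regularity hypothesis twice.

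For part (b), Wiener's theorem applied to the spectral measure $\mu_\varphi:=\langle\varphi,E^U(\cdot)\varphi\rangle$ gives
$$
\lim_{N\to\infty}\tfrac{1}{N}\sum_{n=0}^{N-1}\big|\langle\varphi,U^n\varphi\rangle\big|^2=\sum_{\lambda\in\S^1}\big|\mu_\varphi(\{\lambda\})\big|^2,
$$
and specialising (a) to $\psi=\varphi$ forces the left-hand side to vanish for every $\varphi\in\ker(D)^\perp$. Hence $\mu_\varphi$ has no atoms, so $U|_{\ker(D)^\perp}$ has purely continuous spectrum.
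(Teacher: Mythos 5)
The first thing to note is that the paper does not actually prove this theorem: it is recalled verbatim from \cite[Thm.~2.7 \& Rem.~2.8]{FRT13} and \cite[Thm.~3.1]{Tie14_2}, so your attempt can only be measured against the argument of those references. Your skeleton does match it: show that $D$ is bounded, self-adjoint and commutes with $U$ (your telescoping computation $UD_NU^*-D_N=\frac1N(U^NXU^{-N}-X)$ is correct); reduce (a) by density and uniform boundedness to $\varphi=\eta(D)\varphi_0$ with $\varphi_0,\psi\in\dom(A)$, and use $\eta(t)=t\tilde\eta(t)$ together with the hypothesis $\eta(D)\dom(A)\subset\dom(A)$ to write $\varphi=D\varphi'$ with $\varphi':=\tilde\eta(D)\varphi_0\in\dom(A)$ --- this is exactly the role of that hypothesis; and deduce (b) from (a) (Wiener's theorem works, although taking $\psi=\varphi$ an eigenvector in (a) already rules out point spectrum directly).

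The genuine gap is the central estimate of (a), and you flag it yourself: your substitution of the Ces\`aro expression for $D$ is circular, and the proposed remedy (factor $\eta(t)=t^2\hat\eta(t)$ and ``iterate the absorption'') is not the right fix and is not needed. The decay does not come from a higher power of $D$; it comes from coupling the index of the Ces\`aro mean to the power of $U$ in the matrix element. Set $X:=[A,U]U^{-1}$ and $D_N:=\frac1N\sum_{\ell=0}^{N-1}U^\ell XU^{-\ell}$. The Leibniz rule in $C^1(A)$ gives $ND_N=[A,U^N]U^{-N}$, hence $D_NU^N=\frac1N[A,U^N]$, and for $\varphi',\psi\in\dom(A)$ the commutator form satisfies
$$
\big|\big\langle\varphi',[A,U^N]\psi\big\rangle\big|
=\big|\big\langle A\varphi',U^N\psi\big\rangle-\big\langle\varphi',U^NA\psi\big\rangle\big|
\le\|A\varphi'\|\,\|\psi\|+\|\varphi'\|\,\|A\psi\|,
$$
a bound \emph{independent of} $N$; this is precisely what your operator-norm estimate $\|[A,U^{\pm M}]\|\le M\|[A,U]\|$ throws away, since it only yields $O(1)$ after division by $M$. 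Therefore, using the self-adjointness of $D_N$,
$$
\big\langle D\varphi',U^N\psi\big\rangle
=\big\langle(D-D_N)\varphi',U^N\psi\big\rangle+\big\langle\varphi',D_NU^N\psi\big\rangle,
$$
where the first term is bounded by $\|(D-D_N)\varphi'\|\,\|\psi\|\to0$ by the assumed strong convergence, and the second equals $\frac1N\langle\varphi',[A,U^N]\psi\rangle=O(1/N)$. No iteration and no second factor of $t$ is required; with this correction the rest of your argument goes through.
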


\section{Furstenberg transformations on Cartesian products of infinite-dimensional
tori}\label{Sec_Fur}
\setcounter{equation}{0}

Let $\T^\infty\simeq\R^\infty/\Z^\infty$ be the infinite-dimensional torus with
elements $x\equiv\{x_k\}_{k=1}^\infty$, and let
$\widehat{\T^\infty}$ be the dual group of $\T^\infty$. For each
$n\in\N_{\ge1}$, let $\mu_n$ be the normalised Haar measure on $(\T^\infty)^n$, and
let $\H_n:=\ltwo\big((\T^\infty)^n,\mu_n\big)$ be the corresponding Hilbert space.
In analogy to Furstenberg transformations on Cartesian products of one-dimensional tori
(see \cite[Sec.~2.3]{Fur61}), we consider Furstenberg transformations
on Cartesian products of infinite-dimensional tori $T_d:(\T^\infty)^d\to(\T^\infty)^d$, $d\in\N_{\ge2}$, given by
$$
T_d(x_1,x_2,\ldots,x_d)
:=\big(x_1+\alpha,x_2+\phi_1(x_1),\ldots,x_d+\phi_{d-1}(x_1,x_2,\ldots,x_{d-1})\big),
$$
with $\alpha\in\T^\infty$ such that $\{n\hspace{1pt}\alpha\}_{n\in\Z}$ is dense in $\T^\infty$ and
$\phi_j\in C\big((\T^\infty)^j;\T^\infty\big)$ for each $j\in\{1,\ldots,d-1$\}.
Since $T_d$ is invertible and preserves the measure $\mu_d$, the Koopman operator
$$
W_d:\H_d\to\H_d,\quad\varphi\mapsto\varphi\circ T_d,
$$
is a unitary operator. Furthermore, $W_d$ is reduced by the orthogonal decompositions
$$
\H_d
=\H_1\bigoplus_{j\in\{2,\ldots,d\}}\big(\H_j\ominus\H_{j-1}\big)
=\H_1\bigoplus_{j\in\{2,\ldots,d\},\,\chi\in\widehat{\T^\infty}\setminus\{1\}}
\H_{j,\chi},
\quad\H_{j,\chi}:=\big\{\eta\otimes\chi\mid\eta\in\H_{j-1}\big\},
$$
and the restriction $W_d|_{\H_{j,\chi}}$ is unitarily equivalent to the unitary
operator
$$
U_{j,\chi}\;\!\eta
:=\big(\chi\circ\phi_{j-1}\big)W_{j-1}\eta,\quad\eta\in\H_{j-1}.
$$

In order to define later a conjugate operator for $U_{j,\chi}$,
we first define a suitable group of translations
on $(\T^\infty)^{j-1}$. For this, we choose $\{y_t\}_{t\in\R}$ an ergodic continuous
one-parameter subgroup of $\T^\infty$ such that each map
\begin{equation}\label{ass_C1}
\R\ni t\mapsto\big((y_t)_1,\ldots,(y_t)_n\big)\in\T^n,\quad n\in\N_{\ge1},
\end{equation}
is of class $C^1$.
An example of such a subgroup $\{y_t\}_{t\in\R}$ is for instance given by the formula
$$
(y_t)_k=y^kt \mod\Z,\quad t\in\R,~k\in\N_{\ge1},
$$
with $y\in\R$ a transcendental number (see \cite[Ex.~4.1.1]{CFS82}).
Then, we associate to $\{y_t\}_{t\in\R}$ the translation flow
$$
F_{j-1,t}(x_1,\ldots,x_{j-1}):=(x_1,\ldots,x_{j-1}+y_t),
\quad t\in\R,~(x_1,\ldots,x_{j-1})\in(\T^\infty)^{j-1},
$$
and the translation operators $V_{j-1,t}:\H_{j-1}\to\H_{j-1}$ given by
$V_{j-1,t}\hspace{1pt}\eta:=\eta\circ F_{j-1,t}$.
Due to the continuity of the map $\R\ni t\mapsto y_t\in\T^\infty$ and of the group operation,
the family $\{V_{j-1,t}\}_{t\in\R}$ defines a strongly continuous unitary group in
$\H_{j-1}$ with self-adjoint generator
$$
H_{j-1}\eta:=\slim_{t\to0}it^{-1}\big(V_{j-1,t}-1\big)\eta,
\quad\eta\in\dom(H_{j-1}):=\left\{\eta\in\H_{j-1}\mid
\lim_{t\to0}|t|^{-1}\big\|\big(V_{j-1,t}-1\big)\eta\big\|<\infty\right\}.
$$
When dealing with differential operators on compact manifolds,
one typically does the calculations on an appropriate core of the
operators such as the set of $C^\infty$-functions. But here, the are
no such functions on $(\T^\infty)^{j-1}$, since $\T^\infty$ is not a
manifold ($\T^\infty$ does not admit any differentiable
structure modeled on locally convex spaces, see \cite[Sec.~10.2]{Bog97} for details).
So, we use instead the set $\mathcal B_{j-1}$ of Bruhat test functions on
$(\T^\infty)^{j-1}$, whose definition is the following (see \cite[Sec.~2.2]{BS-C03} or
\cite{Bru61} for details). Set $\T^0:=\{0\}$, and for each $n\in\N$ let
$\pi_n:(\T^\infty)^{j-1}\to(\T^n)^{j-1}$ be the projection given by
$$
\pi_n(x_1,\ldots,x_{j-1}):=
\begin{cases}
(0,\ldots,0) & \hbox{if}~~n=0\\
\big((x_1)_1,\ldots,(x_1)_n,\ldots,(x_{j-1})_1,\ldots,(x_{j-1})_n\big)
& \hbox{if}~~n\ge1.
\end{cases}
$$
Then,
$$
\mathcal B_{j-1}
:=\bigcup_{n\in\N}\big\{\zeta\circ\pi_n\mid
\zeta\in C^\infty\big((\T^n)^{j-1}\big)\big\},
$$
that is, $\mathcal B_{j-1}$ is the set of all functions on $(\T^\infty)^{j-1}$ that
are obtained by lifting to $(\T^\infty)^{j-1}$ any $C^\infty$-function on one of the
Lie groups $(\T^n)^{j-1}$.
The set $\mathcal B_{j-1}$ is dense in $\H_{j-1}$, it is left invariant by the group
$\{V_{j-1,t}\}_{t\in\R}$, and satisfies the inclusion $\mathcal B_{j-1}\subset\dom(H_{j-1})$
(to show the latter, one has to use the $C^1$-assumption \eqref{ass_C1}).
Therefore, it follows from Nelson's theorem \cite[Prop.~5.3]{Amr09} that $H_{j-1}$ is
essentially self-adjoint on $\mathcal B_{j-1}$.

Now, if the (multiplication operator valued) map 
$
\R\ni t\mapsto\chi\circ\phi_{j-1}\circ F_{j-1,t}\in\B(\H_{j-1})
$
is strongly of class $C^1$, then $\chi\circ\phi_{j-1}\in C^1(H_{j-1})$ since
$$
\chi\circ\phi_{j-1}\circ F_{j-1,t}
=V_{j-1,t}\big(\chi\circ\phi_{j-1}\big)V_{j-1,-t}
=\e^{-itH_{j-1}}\big(\chi\circ\phi_{j-1}\big)\e^{itH_{j-1}}.
$$
Thus, the operator
\begin{align*}
g_{j,\chi}
:=\big[H_{j-1},(\chi\circ\phi_{j-1}\big)\big]\big(\overline\chi\circ\phi_{j-1}\big)
&\equiv\hbox{s\;\!-\;\!}\frac\d{\d t}\;\!\big(i\;\!\chi\circ\phi_{j-1}\circ F_{j-1,t}\big)
\big(\overline\chi\circ\phi_{j-1}\big)\Big|_{t=0}\\
&\equiv\hbox{s\;\!-\;\!}\frac\d{\d t}\;\!i\;\!
\chi\circ\big(\phi_{j-1}\circ F_{j-1,t}-\phi_{j-1}\big)\Big|_{t=0}
\end{align*}
is a bounded self-adjoint multiplication operator in $\H_{j-1}$.

\begin{Lemma}\label{lemma_C1}
Take $j\in\{2,\ldots,d\}$ and $\chi\in\widehat{\T^\infty}\setminus\{1\}$, and
assume that the map $\R\ni t\mapsto\chi\circ\phi_{j-1}\circ F_{j-1,t}\in\B(\H_{j-1})$
is strongly of class $C^1$. Then, $U_{j,\chi}\in C^1(H_{j-1})$ with
$[H_{j-1},U_{j,\chi}]=g_{j,\chi}U_{j,\chi}$.
\end{Lemma}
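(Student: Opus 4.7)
I would write $U_{j,\chi}=M_{\chi\circ\phi_{j-1}}W_{j-1}$, where $M_{\chi\circ\phi_{j-1}}$ denotes the operator of multiplication by $\chi\circ\phi_{j-1}$, and show separately that each factor belongs to $C^1(H_{j-1})$, then combine the two commutators via the Leibniz rule for that class.

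The crucial step is to verify that $W_{j-1}\in C^1(H_{j-1})$ with $[H_{j-1},W_{j-1}]=0$, or equivalently, that $W_{j-1}$ commutes with the one-parameter group $\{V_{j-1,t}\}_{t\in\R}$. At the level of the underlying transformations, this reduces to the pointwise identity $F_{j-1,t}\circ T_{j-1}=T_{j-1}\circ F_{j-1,t}$. Since $F_{j-1,t}$ acts as the identity on the first $j-2$ coordinates and adds $y_t$ to the last one, while $T_{j-1}$ adds to $x_{j-1}$ the quantity $\phi_{j-2}(x_1,\ldots,x_{j-2})$ depending only on the first $j-2$ variables, both compositions yield
$$
(x_1,\ldots,x_{j-1})\longmapsto\big(x_1+\alpha,\ldots,x_{j-1}+y_t+\phi_{j-2}(x_1,\ldots,x_{j-2})\big),
$$
so that $\e^{-itH_{j-1}}W_{j-1}\e^{itH_{j-1}}=W_{j-1}$ for every $t\in\R$, giving the claim.

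The second step is then immediate from the discussion preceding the lemma: under the assumption on $t\mapsto\chi\circ\phi_{j-1}\circ F_{j-1,t}$, the multiplication operator $M_{\chi\circ\phi_{j-1}}$ lies in $C^1(H_{j-1})$, and since $\chi$ takes values in $\S^1$ the identity $(\overline\chi\circ\phi_{j-1})(\chi\circ\phi_{j-1})=1$ together with the definition of $g_{j,\chi}$ yields $[H_{j-1},M_{\chi\circ\phi_{j-1}}]=g_{j,\chi}M_{\chi\circ\phi_{j-1}}$.

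Combining the two factors through the Leibniz rule for $C^1(H_{j-1})$ (see \cite[Sec.~5.1]{ABG96}) then gives $U_{j,\chi}\in C^1(H_{j-1})$ and
$$
[H_{j-1},U_{j,\chi}]=[H_{j-1},M_{\chi\circ\phi_{j-1}}]\;\!W_{j-1}+M_{\chi\circ\phi_{j-1}}\;\![H_{j-1},W_{j-1}]=g_{j,\chi}\;\!U_{j,\chi},
$$
as required. The only real obstacle is the identity $F_{j-1,t}\circ T_{j-1}=T_{j-1}\circ F_{j-1,t}$: it is precisely the triangular form of the Furstenberg transformation, combined with the choice of a translation flow acting only on the last coordinate of $(\T^\infty)^{j-1}$, that makes this identity hold and reduces the computation of $[H_{j-1},U_{j,\chi}]$ to a multiplication operator depending only on the new function $\phi_{j-1}$.
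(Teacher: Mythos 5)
Your proof is correct, but it takes a genuinely different route from the paper's. The paper proves the lemma by a direct computation of the sesquilinear form $\big\langle H_{j-1}\eta,U_{j,\chi}\eta\big\rangle-\big\langle\eta,U_{j,\chi}H_{j-1}\eta\big\rangle$ on the core $\B_{j-1}$ of Bruhat test functions, using the commutation of $V_{j-1,t}$ with $W_{j-1}$ inside that computation and concluding from the boundedness of $g_{j,\chi}$ and the density of $\B_{j-1}$ in $\dom(H_{j-1})$. You instead factor $U_{j,\chi}=M_{\chi\circ\phi_{j-1}}W_{j-1}$, observe that the triangular structure of $T_{j-1}$ forces $F_{j-1,t}\circ T_{j-1}=T_{j-1}\circ F_{j-1,t}$ (so that $\e^{-itH_{j-1}}W_{j-1}\e^{itH_{j-1}}=W_{j-1}$ and the Koopman factor has vanishing commutator), note that the $C^1$ hypothesis is literally the statement $M_{\chi\circ\phi_{j-1}}\in C^1(H_{j-1})$ via the identity $\e^{-itH_{j-1}}M_{\chi\circ\phi_{j-1}}\e^{itH_{j-1}}=M_{\chi\circ\phi_{j-1}\circ F_{j-1,t}}$ (which the paper itself records just before the lemma), and combine the two factors by the Leibniz rule for $C^1(A)$. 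Both arguments rest on exactly the same two facts --- the commutation of the translation flow with the Koopman operator, and the strong differentiability of $t\mapsto\chi\circ\phi_{j-1}\circ F_{j-1,t}$ --- but your packaging avoids the test-function and density step entirely and makes the structural reason for the result (the flow acts only on the last coordinate, which $T_{j-1}$ shifts by a quantity depending only on the earlier coordinates) more transparent; the paper's computation is essentially your Leibniz identity unrolled on the core. One cosmetic caveat: for $j=2$ there is no $\phi_0$ and $T_1$ is the bare rotation $x_1\mapsto x_1+\alpha$, so your displayed formula for the composed maps should be read accordingly, the commutation being of course still immediate in that case.
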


\begin{proof}
Take $\eta\in\B_{j-1}$. Then, the commutation of $V_{j-1,t}$ and $W_{j-1}$
and the differentiability assumption imply that
\begin{align*}
&\big\langle H_{j-1}\eta,U_{j,\chi}\eta\big\rangle_{\H_{j-1}}
-\big\langle\eta,U_{j,\chi}H_{j-1}\eta\big\rangle_{\H_{j-1}}\\
&=i\;\!\frac\d{\d t}\left\{-\big\langle V_{j-1,t}\eta,
\big(\chi\circ\phi_{j-1}\big)\;\!W_{j-1}\eta\big\rangle_{\H_{j-1}}
-\big\langle\eta,\big(\chi\circ\phi_{j-1}\big)W_{j-1}V_{j-1,t}\eta
\big\rangle_{\H_{j-1}}\right\}\Big|_{t=0}\\
&=i\;\!\frac\d{\d t}\;\!\big\langle\eta,
\big(\chi\circ\phi_{j-1}\circ F_{j-1,t}-\chi\circ\phi_{j-1}\big)
W_{j-1}V_{j-1,t}\eta\big\rangle_{\H_{j-1}}\Big|_{t=0}\\
&=\frac\d{\d t}\;\!\big\langle\eta,
\big\{\big(i\;\!\chi\circ\phi_{j-1}\circ F_{j-1,t}\big)
\big(\overline\chi\circ\phi_{j-1}\big)-1\big\}
\;\!U_{j,\chi}V_{j-1,t}\eta\big\rangle_{\H_{j-1}}\Big|_{t=0}\\
&=\frac\d{\d t}\;\!\big\langle\eta,
\big\{i\;\!\chi\circ\big(\phi_{j-1}\circ F_{j-1,t}-\phi_{j-1}\big)-1\big\}
\;\!U_{j,\chi}V_{j-1,t}\eta\big\rangle_{\H_{j-1}}\Big|_{t=0}\\
&=\big\langle\eta,g_{j,\chi}\hspace{1pt}U_{j,\chi}\eta\big\rangle_{\H_{j-1}},
\end{align*}
and thus the claim follows from the boundedness of $g_{j,\chi}$ and the density of
$\B_{j-1}$ in $\dom(\H_{j-1})$.
\end{proof}

\begin{Assumption}\label{ass_phi}
For each $j\in\{2,\ldots,d\}$, the map $\phi_{j-1}\in C\big((\T^\infty)^{j-1};\T^\infty\big)$
satisfies $\phi_{j-1}=\xi_{j-1}+\eta_{j-1}$, where
\begin{enumerate}
\item[(i)] $\xi_{j-1}\in C\big((\T^\infty)^{j-1};\T^\infty\big)$ is a group
homomorphism such that
$$
\T^\infty\ni x_{j-1}\mapsto(\chi\circ\xi_{j-1})(0,\ldots,0+x_{j-1})\in\S^1
$$
is nontrivial for each $\chi\in\widehat{\T^\infty}\setminus\{1\}$,
\item[(ii)] $\eta_{j-1}\in C\big((\T^\infty)^{j-1};\T^\infty\big)$ is such that there exists
$\widetilde\eta_{j-1}\in C\big((\T^\infty)^{j-1};\R^\infty\big)$ with
$$
\eta_{j-1}(x_1,\ldots,x_{j-1})=\big(\widetilde\eta_{j-1}(x_1,\ldots,x_{j-1})
~({\rm mod}~\Z^\infty)\big)\quad\hbox{for each }(x_1,\ldots,x_{j-1})\in(\T^\infty)^{j-1},
$$
and with $\R\ni t\mapsto\widetilde\eta_{j-1,k}\circ F_{j-1,t}\in\B(\H_{j-1})$ strongly
of class $C^1$ for each $k\in\N_{\ge1}$.
\end{enumerate}
\end{Assumption}

In the next theorem, we use the fact that the map
$$
\R\ni t\mapsto\big(\chi\circ\xi_{j-1}\big)(0,\ldots,0+y_t)\in\S^1
$$
is a character on $\R$, and thus of class $C^\infty$. We also use the notation
$$
\xi_{j-1}^{(\chi)}
:=\frac\d{\d t}\big(\chi\circ\xi_{j-1}\big)(0,\ldots,0+y_t)\Big|_{t=0}\in i\;\!\R.
$$

\begin{Theorem}[Strong mixing and unique ergodicity]\label{thm_erg}
Suppose that Assumption \ref{ass_phi} is satisfied. Then, $W_d$ is strongly mixing in
$\H_d\ominus\H_1$ and $T_d$ is uniquely ergodic with respect to $\mu_d$.
\end{Theorem}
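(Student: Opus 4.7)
I argue by induction on $d\ge2$, the inductive hypothesis being that $T_{d-1}$ is uniquely ergodic on $(\T^\infty)^{d-1}$. The induction base $d=1$ (not formally required by the theorem but needed as a starting point) is the classical unique ergodicity of the translation by $\alpha$ on $\T^\infty$ with dense orbit. The inductive step splits into two parts: I first establish strong mixing of $W_d$ on $\H_d\ominus\H_1$, and then deduce unique ergodicity of $T_d$.

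For strong mixing, the orthogonal decomposition
$$
\H_d\ominus\H_1
=\big(\H_{d-1}\ominus\H_1\big)\oplus\bigoplus_{\chi\in\widehat{\T^\infty}\setminus\{1\}}\H_{d,\chi}
$$
reduces $W_d$. On $\H_{d-1}\ominus\H_1$ the restriction of $W_d$ coincides with $W_{d-1}$ and is strongly mixing by induction; on each $\H_{d,\chi}$ the restriction is unitarily equivalent to $U_{d,\chi}$, to which I apply Theorem \ref{thm_strong} with conjugate operator $A=H_{d-1}$. Lemma \ref{lemma_C1} yields $U_{d,\chi}\in C^1(H_{d-1})$ with $[H_{d-1},U_{d,\chi}]=g_{d,\chi}U_{d,\chi}$ once I check the strong $C^1$-regularity of $t\mapsto\chi\circ\phi_{d-1}\circ F_{d-1,t}$: splitting $\phi_{d-1}=\xi_{d-1}+\eta_{d-1}$ per Assumption \ref{ass_phi}, the $\xi$-factor produces the $C^\infty$ character $t\mapsto\chi(\xi_{d-1}(0,\ldots,0+y_t))$ of $\R$, while the $\eta$-factor is $C^1$ because $\chi\in\widehat{\T^\infty}$ involves only finitely many coordinates and each $\widetilde\eta_{d-1,k}\circ F_{d-1,t}$ is strongly $C^1$ by hypothesis.

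The crux is the identification of the operator $D$ appearing in Theorem \ref{thm_strong}. A direct calculation shows that $U_{d,\chi}$ conjugates any bounded multiplication operator $f$ to multiplication by $f\circ T_{d-1}$, so $U_{d,\chi}^\ell g_{d,\chi}U_{d,\chi}^{-\ell}$ is multiplication by $g_{d,\chi}\circ T_{d-1}^\ell$. By the inductive hypothesis $T_{d-1}$ is uniquely ergodic, so the Birkhoff averages of the continuous function $g_{d,\chi}$ converge \emph{uniformly} to the constant $c:=\int_{(\T^\infty)^{d-1}}g_{d,\chi}\,\d\mu_{d-1}$, whence $D=c\;\!\id$ in operator norm. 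I then claim $c=i\;\!\xi_{d-1}^{(\chi)}$, which is nonzero. Using the splitting above and the homomorphism property of $\xi_{d-1}$, I obtain $g_{d,\chi}=i\;\!\xi_{d-1}^{(\chi)}+iR$, where $R$ is built from the $\eta$-factor. Writing $\chi$ through the lift $\widetilde\eta_{d-1}$ and a finite family of integers, and exchanging $\frac\d{\d t}\big|_{t=0}$ with $\int\cdot\,\d\mu_{d-1}$ (justified by the strong $C^1$-regularity applied to the constant vector $1\in\H_{d-1}$), $\int R\,\d\mu_{d-1}$ reduces to a linear combination of $\frac\d{\d t}\big|_{t=0}\int\widetilde\eta_{d-1,k}\circ F_{d-1,t}\,\d\mu_{d-1}$, each of which vanishes by translation invariance of $\mu_{d-1}$ under $F_{d-1,t}$. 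Non-vanishing of $\xi_{d-1}^{(\chi)}$ follows from Assumption \ref{ass_phi}(i): the character $x\mapsto\chi(\xi_{d-1}(0,\ldots,0+x))$ of $\T^\infty$ is nontrivial, so its restriction to the dense (by ergodicity) one-parameter subgroup $\{y_t\}_{t\in\R}$ is a nontrivial character of $\R$, which has nonzero derivative at $t=0$. With $D=c\;\!\id$ a nonzero scalar operator, the remaining hypotheses of Theorem \ref{thm_strong} are trivial, and part (a) yields strong mixing of $U_{d,\chi}$ on all of $\H_{d-1}$.

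For unique ergodicity of $T_d$, the strong mixing just proved implies that for each nontrivial $\chi$, the operator $U_{d,\chi}$ has no nonzero fixed vector, so the cocycle equation $f\circ T_{d-1}=\overline{\chi\circ\phi_{d-1}}\;\!f$ admits no nonzero solution in $\H_{d-1}$. Fourier-expanding a $T_d$-invariant $F\in\H_d$ in the $d$-th variable and matching coefficients forces $F\in\H_1$, and unique ergodicity of $T_1$ then reduces $F$ to a constant; hence $T_d$ is ergodic. Unique ergodicity of $T_d$ with respect to $\mu_d$ finally follows from the classical theorem of Furstenberg \cite{Fur61} on group extensions, applied to the skew product $T_d$ over the uniquely ergodic base $T_{d-1}$ with compact abelian fibre $\T^\infty$ (unique ergodicity of such an extension is equivalent to its ergodicity). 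I expect the principal technical obstacle to be the identification $c=i\;\!\xi_{d-1}^{(\chi)}$, which rests on the precise interplay between the homomorphism structure of $\xi_{d-1}$, the character $\chi$, and the translation invariance of $\mu_{d-1}$ under the flow $F_{d-1,t}$.
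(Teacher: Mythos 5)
Your overall strategy coincides with the paper's: induction on $d$, the decomposition of $\H_d\ominus\H_1$ into $\H_{d-1}\ominus\H_1$ and the subspaces $\H_{d,\chi}$, the application of Theorem \ref{thm_strong} with conjugate operator $H_{d-1}$ via Lemma \ref{lemma_C1}, the identification $D=i\;\!\xi_{d-1}^{(\chi)}\;\!\id\ne0$, and the passage from ergodicity to unique ergodicity for compact abelian group extensions. The splitting of $g_{d,\chi}$ into the character part and the $\widetilde\eta$-part, and the vanishing of the mean of the latter by translation invariance, is exactly the computation leading to \eqref{eq_g} and its integral in the paper.

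There is, however, one step that does not go through as you wrote it: the identification of $D$ via \emph{uniform} convergence of the Birkhoff averages of ``the continuous function $g_{d,\chi}$''. Under Assumption \ref{ass_phi} alone, $g_{d,\chi}$ need not be continuous: by \eqref{eq_g} it is $i\;\!\xi_{d-1}^{(\chi)}+2\pi i\sum_k n_k\;\!(H_{d-1}\widetilde\eta_{d-1,k})$, and the functions $H_{d-1}\widetilde\eta_{d-1,k}$ are only strong derivatives of multiplication-operator-valued maps, hence merely elements of $\linf\big((\T^\infty)^{d-1}\big)$. Continuity of $H_{d-1}\widetilde\eta_{d-1,k}$ is an \emph{additional} hypothesis that the paper imposes only in Theorem \ref{thm_Lebesgue}, precisely because there a uniform (hence norm) convergence of the averages $g_{j,\chi}^{(N)}$ is needed for the strict Mourre estimate. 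For the present theorem you do not need uniform convergence: Theorem \ref{thm_strong} only asks for the \emph{strong} limit $D$, and this is obtained (as in the paper) from Birkhoff's pointwise ergodic theorem applied to the $\linf$ function $g_{d,\chi}$ together with the dominated convergence theorem, using only the ergodicity of $T_{d-1}$ supplied by the induction hypothesis. So the gap is repairable by replacing your unique-ergodicity/continuity argument with this measure-theoretic one; the rest of your proof (non-vanishing of $\xi_{d-1}^{(\chi)}$ from the density of $\{y_t\}_{t\in\R}$ and the nontriviality of the character, the Fourier-coefficient argument showing that a $T_d$-invariant function lies in $\H_{d-1}$ and hence is constant, and the equivalence of ergodicity and unique ergodicity for such skew products) matches the paper's reasoning.
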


These results of strong mixing and unique ergodicity are an extension to the case of
infinite-dimensional tori of results previously obtained in the case of
one-dimensional tori (see \cite[Thm.~2.1]{Fur61} for the unique ergodicity and
\cite[Rem.~1]{ILR93} for the strong mixing property). For instance, if the functions
$\eta_{j-1}$ in Assumption \ref{ass_phi} were to depend only on a finite number of
variables, then the strong $C^1$ regularity condition on $\eta_{j-1}$ in Assumption
\ref{ass_phi}(ii) would reduce to a uniform Lipschitz condition of $\eta_{j-1}$ along
the flow $\{F_{j-1,t}\}_{t\in\R}$, as in the one-dimensional case treated by
Furstenberg in \cite[Thm.~2.1]{Fur61}.

\begin{proof}
(i) Take $j\in\{2,\ldots,d\}$, $\chi\in\widehat{\T^\infty}\setminus\{1\}$ and $t\in\R$.
Then, there exist $k_\chi\in\N_{\ge1}$ and $n_1,\ldots,n_{k_\chi}\in\Z$ such that
$$
\chi(x_{j-1})=\e^{2\pi i\sum_{k=1}^{k_\chi}n_kx_{j-1,k}},
$$
with $x_{j-1}\in\T^\infty$ and $x_{j-1,k}\in[0,1)$ the $k$-th cyclic component
of $x_{j-1}$. Therefore, we infer from Assumption \ref{ass_phi} that
\begin{align*}
\chi\circ\big(\phi_{j-1}\circ F_{j-1,t}-\phi_{j-1}\big)
&=\chi\circ\big(\xi_{j-1}\circ F_{j-1,t}-\xi_{j-1}\big)
\cdot\chi\circ\big(\eta_{j-1}\circ F_{j-1,t}-\eta_{j-1}\big)\\
&=\big(\chi\circ\xi_{j-1}\big)(0,\ldots,0+y_t)
\cdot\e^{2\pi i\sum_{k=1}^{k_\chi}n_k(\widetilde\eta_{j-1}\circ F_{j-1,t}
-\widetilde\eta_{j-1})_k},
\end{align*}
and Lemma \ref{lemma_C1} implies that $U_{j,\chi}\in C^1(H_{j-1})$ with
$[H_{j-1},U_{j,\chi}]=g_{j,\chi}\hspace{1pt}U_{j,\chi}$ and
\begin{align}
g_{j,\chi}
=\hbox{s\;\!-\;\!}\frac\d{\d t}\;\!i\;\!
\chi\circ(\phi_{j-1}\circ F_{j-1,t}-\phi_{j-1})\Big|_{t=0}
&=i\;\!\xi_{j-1}^{(\chi)}-2\pi\sum_{k=1}^{k_\chi}n_k
\left(\hbox{s\;\!-\;\!}\frac\d{\d t}\;\!\widetilde\eta_{j-1,k}\circ F_{j-1,t}
\Big|_{t=0}\right)\nonumber\\
&=i\;\!\xi_{j-1}^{(\chi)}
+2\pi i\sum_{k=1}^{k_\chi}n_k\;\!(H_{j-1}\widetilde\eta_{j-1,k}).\label{eq_g}
\end{align}

(ii) We now proceed by induction on $d$ to prove the claims. For the case $d=2$, we take $\chi\in\widehat{\T^\infty}\setminus\{1\}$ and note that point (i) implies
\begin{align*}
D_{2,\chi}
&:=\slim_{N\to\infty}\frac1N\sum_{\ell=0}^{N-1}(U_{2,\chi})^\ell
\big([H_1,U_{2,\chi}](U_{2,\chi})^{-1}\big)(U_{2,\chi})^{-\ell}\\
&=\slim_{N\to\infty}\frac1N\sum_{\ell=0}^{N-1}g_{2,\chi}\circ(T_1)^\ell\\
&=i\;\!\xi_1^{(\chi)}+2\pi i\sum_{k=1}^{k_\chi}n_k
\left(\slim_{N\to\infty}\frac1N\sum_{\ell=0}^{N-1}(H_1\widetilde\eta_{1,k})
\circ(T_1)^\ell\right).
\end{align*}
Since $T_1$ is ergodic and $H_1\widetilde\eta_{1,k}\in\linf(\T^\infty)$, it follows by
Birkhoff's pointwise ergodic theorem and Lebesgue dominated convergence theorem that
$$
D_{2,\chi}
=i\;\!\xi_1^{(\chi)}+2\pi i\sum_{k=1}^{k_\chi}n_k
\int_{\T^\infty}\d\mu_1\;\!(H_1\widetilde\eta_{1,k})
=i\;\!\xi_1^{(\chi)}+2\pi i\sum_{k=1}^{k_\chi}n_k
\int_{\T^\infty}\d\mu_1\;\!\big\langle1,H_1\widetilde\eta_{1,k}\big\rangle_{\H_1}
=i\;\!\xi_1^{(\chi)}.
$$
Now, since the character
$\T^\infty\ni x_{j-1}\mapsto(\chi\circ\xi_1)(0,\ldots,0+x_{j-1})\in\S^1$
is nontrivial and the subgroup $\{y_t\}_{t\in\R}$ ergodic, we have
$\xi_1^{(\chi)}\ne0$ (see \cite[Thm.~4.1.1']{CFS82}). Thus, $D_{2,\chi}\ne0$, and we deduce from
Theorem \ref{thm_strong}(a) that $U_{2,\chi}$ is strongly mixing. Since this is true for each
$\chi\in\widehat{\T^\infty}\setminus\{1\}$, and since $U_{2,\chi}$ is unitarily
equivalent to $W_2|_{\H_{2,\chi}}$, we infer that $W_2$ is strongly mixing in $\oplus_{\chi\in\widehat{\T^\infty}\setminus\{1\}}\H_{2,\chi}=\H_2\ominus\H_1$.

To show that $T_2$ is uniquely ergodic with respect to $\mu_2$, we take an eigenvector of
$W_2$ with eigenvalue $1$, that is, a vector $\varphi\in\H_2$ such that $W_2\;\!\varphi=\varphi$.
Since $W_2$ is strongly mixing in $\H_2\ominus\H_1$, $W_2$ has purely continuous spectrum
in $\H_2\ominus\H_1$ (see Theorem \ref{thm_strong}(b)), and thus
$\varphi=\eta\otimes1$ for some $\eta\in\H_1$. So,
$$
W_2\;\!\varphi=\varphi
\iff W_2\;\!(\eta\otimes1)=\eta\otimes1
\iff W_1\;\!\eta=\eta,
$$
and thus $\eta$ is an eigenvector of $W_1$ with eigenvalue $1$. It follows that
$\eta$ is constant
$\mu_1$-almost everywhere due to the ergodicity of $T_1$. Therefore, $\varphi$ is constant
$\mu_2$-almost everywhere, and $T_2$ is ergodic. This implies that $T_2$ is uniquely ergodic
because ergodicity implies unique ergodicity for
transformations such as $T_2$ (see \cite[Thm.~4.2.1]{CFS82}).

Now, assume the claims are true for $d-1\ge1$.
Then, $W_{d-1}$ is strongly mixing in $\H_{d-1}\ominus\H_1$. Furthermore, a calculation as in the
case $d=2$ shows that $W_d$ is strongly mixing in
$
\oplus_{\chi\in\widehat{\T^\infty}\setminus\{1\}}\H_{d,\chi}
=\H_d\ominus\H_{d-1}
$.
This implies that $W_d$ is strongly mixing in
$$
\big(\H_{d-1}\ominus\H_1\big)\oplus\big(\H_d\ominus\H_{d-1}\big)=\H_d\ominus\H_1.
$$
This, together with the unique ergodicity of $T_{d-1}$, allows us to show that $T_d$
is uniquely ergodic as in the case $d=2$.
\end{proof}

We know from the proof of Theorem \ref{thm_erg} that if Assumption \ref{ass_phi} is satisfied,
then $U_{j,\chi}\in C^1(H_{j-1})$ with $[H_{j-1},U_{j,\chi}]=g_{j,\chi}\hspace{1pt}U_{j,\chi}$.
So, it follows from \cite[Sec.~4]{FRT13} that the operator
$$
A_{j,\chi}^{(N)}\eta
:=\frac1N\sum_{\ell=0}^{N-1}(U_{j,\chi})^\ell\;\!H_{j-1}(U_{j,\chi})^{-\ell}\;\!\eta,
\quad N\in\N_{\ge1},~\eta\in\dom\big(A_{j,\chi}^{(N)}\big):=\dom(H_{j-1}),
$$
is self-adjoint, and that $U_{j,\chi}\in C^1\big(A_{j,\chi}^{(N)}\big)$
with
$$
\big[A_{j,\chi}^{(N)},U_{j,\chi}\big]=g_{j,\chi}^{(N)}U_{j,\chi}
\qquad\hbox{and}\qquad
g_{j,\chi}^{(N)}:=\frac1N\sum_{\ell=0}^{N-1}g_{j,\chi}\circ(T_{j-1})^\ell.
$$

\begin{Theorem}[Countable Lebesgue spectrum]\label{thm_Lebesgue}
Suppose that Assumption \ref{ass_phi} is satisfied, and assume for each $j\in\{2,\ldots,d\}$
and $k\in\N_{\ge1}$ that $H_{j-1}\widetilde\eta_{j-1,k}\in C\big((\T^\infty)^{j-1}\big)$ and that
\begin{equation}\label{ass_C0}
\int_0^1\frac{\d t}t\;\!\big\|(H_{j-1}\widetilde\eta_{j-1,k})\circ F_{j-1,t}
-(H_{j-1}\widetilde\eta_{j-1,k})\big\|_{\linf((\T^\infty)^{j-1})}<\infty.
\end{equation}
Then, $W_d$ has countable Lebesgue spectrum in $\H_d\ominus\H_1$.
\end{Theorem}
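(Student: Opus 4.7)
The plan is to apply Theorem \ref{thm_abs} to each $U_{j,\chi}$ individually, with conjugate operator $\varepsilon\;\!A_{j,\chi}^{(N)}$ for $N$ large and a suitable sign $\varepsilon\in\{+1,-1\}$. Since the decomposition $\H_d\ominus\H_1=\bigoplus_{j\in\{2,\ldots,d\},\,\chi\in\widehat{\T^\infty}\setminus\{1\}}\H_{j,\chi}$ reduces $W_d$ and involves only countably many summands ($\widehat{\T^\infty}$ being countable), obtaining purely absolutely continuous spectrum on $\S^1$ for each $U_{j,\chi}$ will yield countable Lebesgue spectrum for $W_d|_{\H_d\ominus\H_1}$.

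The first task is to upgrade the $C^1(H_{j-1})$ regularity from Lemma \ref{lemma_C1} to $C^{1+0}(H_{j-1})$. Using $[H_{j-1},U_{j,\chi}]=g_{j,\chi}U_{j,\chi}$ together with the intertwining $\e^{-itH_{j-1}}g_{j,\chi}\e^{itH_{j-1}}=g_{j,\chi}\circ F_{j-1,t}$, the Leibniz identity decomposes $\e^{-itH_{j-1}}[H_{j-1},U_{j,\chi}]\e^{itH_{j-1}}-[H_{j-1},U_{j,\chi}]$ as $(g_{j,\chi}\circ F_{j-1,t}-g_{j,\chi})\e^{-itH_{j-1}}U_{j,\chi}\e^{itH_{j-1}}+g_{j,\chi}(\e^{-itH_{j-1}}U_{j,\chi}\e^{itH_{j-1}}-U_{j,\chi})$. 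The second summand contributes a convergent integral in $\d t/t$ since $U_{j,\chi}\in C^1(H_{j-1})$ gives $\|\e^{-itH_{j-1}}U_{j,\chi}\e^{itH_{j-1}}-U_{j,\chi}\|=\O(|t|)$, while the first reduces matters to
$$
\int_0^1\frac{\d t}t\;\!\big\|g_{j,\chi}\circ F_{j-1,t}-g_{j,\chi}\big\|_{\linf((\T^\infty)^{j-1})}<\infty.
$$
By formula \eqref{eq_g}, $g_{j,\chi}$ equals $i\;\!\xi_{j-1}^{(\chi)}$ plus a finite linear combination of the $H_{j-1}\widetilde\eta_{j-1,k}$, so assumption \eqref{ass_C0} provides exactly this Dini condition. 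Stability of the $C^{1+0}$ class under the averaging $A\mapsto N^{-1}\sum_{\ell=0}^{N-1}(U_{j,\chi})^\ell A(U_{j,\chi})^{-\ell}$ (in the spirit of \cite[Sec.~4]{FRT13}) then promotes $U_{j,\chi}\in C^{1+0}(H_{j-1})$ to $U_{j,\chi}\in C^{1+0}(A_{j,\chi}^{(N)})$ for each $N\in\N_{\ge1}$.

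The second task is the strict Mourre estimate on $\S^1$. Because $T_{j-1}$ is uniquely ergodic by Theorem \ref{thm_erg} and $g_{j,\chi}$ is continuous on $(\T^\infty)^{j-1}$ (thanks to the hypothesis $H_{j-1}\widetilde\eta_{j-1,k}\in C((\T^\infty)^{j-1})$), the uniform version of Birkhoff's ergodic theorem yields $g_{j,\chi}^{(N)}\to\int g_{j,\chi}\;\!\d\mu_{j-1}=i\;\!\xi_{j-1}^{(\chi)}$ uniformly on $(\T^\infty)^{j-1}$ as $N\to\infty$, the limit being a nonzero real number (real because the self-adjoint multiplication operator $g_{j,\chi}$ is real-valued, nonzero as established in the proof of Theorem \ref{thm_erg}). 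Hence, for $N$ sufficiently large and the appropriate sign $\varepsilon\in\{+1,-1\}$, the self-adjoint multiplication operator $\varepsilon\;\!U_{j,\chi}^*[A_{j,\chi}^{(N)},U_{j,\chi}]=\varepsilon\;\!g_{j,\chi}^{(N)}\circ T_{j-1}^{-1}$ is pointwise bounded below by $|\xi_{j-1}^{(\chi)}|/2$, giving the strict Mourre estimate ($K=0$) on $\Theta=\S^1$ for $U_{j,\chi}$ with conjugate operator $\varepsilon\;\!A_{j,\chi}^{(N)}$.

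Theorem \ref{thm_abs} then yields purely absolutely continuous spectrum on $\S^1$ for each $U_{j,\chi}$. Since $W_d|_{\H_d\ominus\H_1}$ is a countable direct sum of such operators of Lebesgue spectral type, this assembles into countable Lebesgue spectrum on $\H_d\ominus\H_1$. The main technical obstacle is the first task: the Leibniz-identity transfer of the Dini condition \eqref{ass_C0} from the functions $H_{j-1}\widetilde\eta_{j-1,k}$ to $g_{j,\chi}$, followed by the $C^{1+0}$-stability step promoting $U_{j,\chi}\in C^{1+0}(H_{j-1})$ to $U_{j,\chi}\in C^{1+0}(A_{j,\chi}^{(N)})$ under the averaging defining $A_{j,\chi}^{(N)}$.
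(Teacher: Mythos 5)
Your proposal follows essentially the same route as the paper: upgrade $U_{j,\chi}$ from $C^1(H_{j-1})$ to $C^{1+0}(H_{j-1})$ via the Dini condition \eqref{ass_C0} on $g_{j,\chi}$, transfer this to $C^{1+0}\big(A_{j,\chi}^{(N)}\big)$ by the averaging stability of \cite[Sec.~4]{FRT13}, obtain the uniform convergence $g_{j,\chi}^{(N)}\to i\;\!\xi_{j-1}^{(\chi)}\ne0$ from unique ergodicity and the continuity of the $H_{j-1}\widetilde\eta_{j-1,k}$, and conclude a strict Mourre estimate on $\S^1$ and purely absolutely continuous spectrum for each $U_{j,\chi}$ from Theorem \ref{thm_abs}. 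Your Leibniz-identity verification that the Dini condition on $g_{j,\chi}$ alone controls the Dini condition on $[H_{j-1},U_{j,\chi}]=g_{j,\chi}\hspace{1pt}U_{j,\chi}$ is in fact more explicit than what the paper writes.

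The one genuine gap is the final step. Purely absolutely continuous spectrum for each $U_{j,\chi}$ does not by itself make these operators ``of Lebesgue spectral type'': absolute continuity of the spectral measures says nothing about the maximal spectral type being equivalent to Lebesgue measure on all of $\S^1$, nor about the multiplicity being countably infinite and homogeneous, and a countable direct sum of operators with purely absolutely continuous spectrum need not have countable Lebesgue spectrum. The paper closes this by invoking the arguments of \cite[Lemmas~3~\&~4]{ILR93}, which use the structure of $U_{j,\chi}$ as a weighted Koopman operator over an ergodic translation: the maximal spectral type is quasi-invariant under multiplication by the eigenvalues of the base rotation, which are dense in $\S^1$, so an absolutely continuous maximal spectral type must be equivalent to Lebesgue measure, and a separate multiplicity argument yields countable homogeneous multiplicity. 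Some version of this argument is needed; asserting that the summands are of Lebesgue type is circular.
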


\begin{proof}
Take $j\in\{2,\ldots,d\}$ and $\chi\in\widehat{\T^\infty}\setminus\{1\}$. Then, we
know from Lemma \ref{lemma_C1} that $U_{j,\chi}\in C^1(H_{j-1})$ with
$[H_{j-1},U_{j,\chi}]=g_{j,\chi}\hspace{1pt}U_{j,\chi}$. Furthermore, \eqref{eq_g} and
\eqref{ass_C0} imply that
\begin{align*}
&\int_0^1\frac{\d t}t\;\!\big\|\e^{-itH_{j-1}}g_{j,\chi}\e^{itH_{j-1}}-g_{j,\chi}\big\|
_{\B(\H_{j-1})}\\
&=\int_0^1\frac{\d t}t\;\!\big\|g_{j,\chi}\circ F_{j-1,t}-g_{j,\chi}\big\|
_{\linf((\T^\infty)^{j-1})}\\
&\le2\pi\sum_{k=1}^{k_\chi}|n_k|\int_0^1\frac{\d t}t\;\!
\big\|(H_{j-1}\widetilde\eta_{j-1,k})\circ F_{j-1,t}
-(H_{j-1}\widetilde\eta_{j-1,k})\big\|_{\linf((\T^\infty)^{j-1})}\\
&<\infty.
\end{align*}
So, we obtain that $U_{j,\chi}\in C^{1+0}(H_{j-1})$ with
$[H_{j-1},U_{j,\chi}]=g_{j,\chi}\hspace{1pt}U_{j,\chi}$, and thus deduce from \cite[Sec.~4]{FRT13}
that $U_{j,\chi}\in C^{1+0}\big(A_{j,\chi}^{(N)}\big)$ with
$\big[A_{j,\chi}^{(N)},U_{j,\chi}\big]=g_{j,\chi}^{(N)}U_{j,\chi}$.
Now, $H_{j-1}\widetilde\eta_{j-1,k}\in C\big((\T^\infty)^{j-1}\big)$ and
$T_{j-1}$ is uniquely ergodic with respect to $\mu_{j-1}$ due to Theorem \ref{thm_erg}. Thus,
we infer from \eqref{eq_g} that
\begin{align*}
\lim_{N\to\infty}g_{j,\chi}^{(N)}
&=i\;\!\xi_{j-1}^{(\chi)}+2\pi i\sum_{k=1}^{k_\chi}n_k\;\!\frac1N\sum_{\ell=0}^{N-1}
(H_{j-1}\widetilde\eta_{j-1,k})\circ(T_{j-1})^\ell\\
&=i\;\!\xi_{j-1}^{(\chi)}+2\pi i\sum_{k=1}^{k_\chi}n_k\int_{(\T^\infty)^{j-1}}\d\mu_{j-1}\;\!
\big\langle1,H_{j-1}\widetilde\eta_{j-1,k}\big\rangle_{\H_{j-1}}\\
&=i\;\!\xi_{j-1}^{(\chi)}
\end{align*}
uniformly on $(\T^\infty)^{j-1}$. Since $\xi_{j-1}^{(\chi)}\ne0$ by the proof of
Theorem \ref{thm_erg}, one has $\big|g_{j,\chi}^{(N)}\big|>0$ if $N$ is large enough.
So, $\big|g_{j,\chi}^{(N)}\big|\ge a$ with
$a:=\inf_{x\in(\T^\infty)^{j-1}}\big|g_{j,\chi}^{(N)}(x)\big|>0$, and $U_{j,\chi}$ satisfies the following strict Mourre estimate on $\S^1$\hspace{1pt}:
$$
(U_{j,\chi})^*\big[{\rm sgn}\big(g_{j,\chi}^{(N)}\big)A_{j,\chi}^{(N)},U_{j,\chi}\big]
=(U_{j,\chi})^*\big|g_{j,\chi}^{(N)}\big|\;\!U_{j,\chi}
\ge a.
$$
Therefore, all the assumptions of Theorem \ref{thm_abs} are satisfied, and thus
$U_{j,\chi}$ has purely absolutely continuous spectrum. Since this occurs for each
$j\in\{2,\ldots,d\}$ and $\chi\in\widehat{\T^\infty}\setminus\{1\}$, and since
$U_{j,\chi}$ is unitarily equivalent to $W_d|_{\H_{j,\chi}}$, one infers that $W_d$
has purely absolutely continuous spectrum in
$
\bigoplus_{j\in\{2,\ldots,d\},\,\chi\in\widehat{\T^\infty}\setminus\{1\}}\H_{j,\chi}
=\H_d\ominus\H_1
$.
Finally, the fact that $W_d$ has has countable Lebesgue spectrum in $\H_d\ominus\H_1$
can be shown in a similar way as in \cite[Lemmas~3~\&~4]{ILR93}.
\end{proof}

The result of Theorem \ref{thm_Lebesgue} is an extension to the case of
infinite-dimensional tori of results previously obtained in the case of
one-dimensional tori (see \cite[Cor.~3]{ILR93} or \cite[Thm.~5.3]{Tie13}).


\begin{thebibliography}{10}

\bibitem{Amr09}
W.~O. Amrein.
\newblock {\em Hilbert space methods in quantum mechanics}.
\newblock Fundamental Sciences. EPFL Press, Lausanne, 2009.

\bibitem{ABG96}
W.~O. Amrein, A.~Boutet~de Monvel, and V.~Georgescu.
\newblock {\em {$C_0$}-groups, commutator methods and spectral theory of
  {$N$}-body {H}amiltonians}, volume 135 of {\em Progress in Mathematics}.
\newblock Birkh\"auser Verlag, Basel, 1996.

\bibitem{BS-C03}
A.~Bendikov and L.~Saloff-Coste.
\newblock On the sample paths of {B}rownian motions on compact infinite
  dimensional groups.
\newblock {\em Ann. Probab.}, 31(3):1464--1493, 2003.

\bibitem{Bog97}
V.~I. Bogachev.
\newblock Differentiable measures and the {M}alliavin calculus.
\newblock {\em J. Math. Sci. (New York)}, 87(4):3577--3731, 1997.
\newblock Analysis, 9.

\bibitem{Bru61}
F.~Bruhat.
\newblock Distributions sur un groupe localement compact et applications \`a
  l'\'etude des repr\'esentations des groupes {$\wp $}-adiques.
\newblock {\em Bull. Soc. Math. France}, 89:43--75, 1961.

\bibitem{CFS82}
I.~P. Cornfeld, S.~V. Fomin, and Y.~G. Sina{\u\i}.
\newblock {\em Ergodic theory}, volume 245 of {\em Grundlehren der
  Mathematischen Wissenschaften [Fundamental Principles of Mathematical
  Sciences]}.
\newblock Springer-Verlag, New York, 1982.
\newblock Translated from the Russian by A. B. Sosinski{\u\i}.

\bibitem{FRT13}
C.~Fern\'andez, S.~Richard, and R.~Tiedra~de Aldecoa.
\newblock Commutator methods for unitary operators.
\newblock {\em J. Spectr. Theory}, 3(3):271--292, 2013.

\bibitem{Fur61}
H.~Furstenberg.
\newblock Strict ergodicity and transformation of the torus.
\newblock {\em Amer. J. Math.}, 83:573--601, 1961.

\bibitem{Gre11}
G.~Greschonig.
\newblock Nilpotent extensions of {F}urstenberg transformations.
\newblock {\em Israel J. Math.}, 183:381--397, 2011.

\bibitem{ILR93}
A.~Iwanik, M.~Lema{\'n}czyk, and D.~Rudolph.
\newblock Absolutely continuous cocycles over irrational rotations.
\newblock {\em Israel J. Math.}, 83(1-2):73--95, 1993.

\bibitem{Ji86}
R.~Ji.
\newblock {\em O{N} {THE} {CROSSED} {PRODUCT} {C}*-{ALGEBRAS} {ASSOCIATED}
  {WITH} {FURSTENBERG} {TRANSFORMATIONS} {ON} {TORI}}.
\newblock ProQuest LLC, Ann Arbor, MI, 1986.
\newblock Thesis (Ph.D.)--State University of New York at Stony Brook.

\bibitem{OP06}
H.~Osaka and N.~C. Phillips.
\newblock Furstenberg transformations on irrational rotation algebras.
\newblock {\em Ergodic Theory Dynam. Systems}, 26(5):1623--1651, 2006.

\bibitem{Rei13}
K.~Reihani.
\newblock {$K$}-theory of {F}urstenberg transformation group {$C^*$}-algebras.
\newblock {\em Canad. J. Math.}, 65(6):1287--1319, 2013.

\bibitem{Tie14_2}
R.~Tiedra~de Aldecoa.
\newblock Commutator criteria for strong mixing.
\newblock to appear in Ergodic Theory Dynam. Systems, preprint on
  \texttt{http://arxiv.org/abs/1406.5777}.

\bibitem{Tie13}
R.~Tiedra~de Aldecoa.
\newblock Commutator methods for the spectral analysis of uniquely ergodic
  dynamical systems.
\newblock {\em Ergodic Theory and Dynamical Systems}, FirstView:1--24, 10 2014.

\end{thebibliography}

\def\polhk#1{\setbox0=\hbox{#1}{\ooalign{\hidewidth
  \lower1.5ex\hbox{`}\hidewidth\crcr\unhbox0}}}
  \def\polhk#1{\setbox0=\hbox{#1}{\ooalign{\hidewidth
  \lower1.5ex\hbox{`}\hidewidth\crcr\unhbox0}}}
  \def\polhk#1{\setbox0=\hbox{#1}{\ooalign{\hidewidth
  \lower1.5ex\hbox{`}\hidewidth\crcr\unhbox0}}} \def\cprime{$'$}
  \def\cprime{$'$} \def\polhk#1{\setbox0=\hbox{#1}{\ooalign{\hidewidth
  \lower1.5ex\hbox{`}\hidewidth\crcr\unhbox0}}}
  \def\polhk#1{\setbox0=\hbox{#1}{\ooalign{\hidewidth
  \lower1.5ex\hbox{`}\hidewidth\crcr\unhbox0}}} \def\cprime{$'$}
  \def\cprime{$'$}


\end{document}